\newtheorem{theorem}{Theorem}[section]
\newtheorem{lemma}[theorem]{Lemma}
\newtheorem{proposition}[theorem]{Proposition}
\newtheorem{claim}[theorem]{Claim}
\newtheorem{corollary}[theorem]{Corollary}
\newtheorem{question}[theorem]{Question}
\theoremstyle{definition}
\newtheorem{example}[theorem]{Example}
\theoremstyle{remark}
\newtheorem{remark}[theorem]{Remark}
\numberwithin{equation}{section}
\newcommand{\FL}{{\rm FL}}
\begin{document}

% \title[short text for running head]{full title}
%\title{$3$--manifold groups with generalized torsion elements}
\title[Hyperbolic knots with large torsion order]{Hyperbolic knots with arbitrarily large torsion order
in knot Floer homology}

%    Only \author and \address are required; other information is
%    optional.  Remove any unused author tags.

%    author one information
% \author[short version for running head]{name for top of paper}

\author[K. Himeno]{Keisuke Himeno}
\address{Graduate School of Advanced Science and Engineering, Hiroshima University,
1-3-1 Kagamiyama, Higashi-hiroshima, 7398526, Japan}
\email{himeno-keisuke@hiroshima-u.ac.jp}
\thanks{The first author was supported by JST SPRING, Grant Number JPMJSP2132. }
%%%
\author[M. Teragaito]{Masakazu Teragaito}
\address{Department of Mathematics and Mathematics Education, Hiroshima University,
1-1-1 Kagamiyama, Higashi-hiroshima 7398524, Japan.}
%\curraddr{}
\email{teragai@hiroshima-u.ac.jp}
\thanks{The second author has been partially supported by JSPS KAKENHI Grant Number JP20K03587.}

%    \subjclass is required.
\subjclass[2020]{Primary 57K10; Secondary 57K18}

\date{\today}

%\dedicatory{}

%    "Communicated by" -- provide editor's name; required.
\commby{}

%    Abstract is required.
\begin{abstract}
In knot Floer homology, there are two types of torsion order. 
One  is the minimal power of the action of the variable $U$ to annihilate the
$\mathbb{F}_2[U]$-torsion submodule of the minus version of knot Floer homology $\mathrm{HFK}^-(K)$.
This is introduced by Juh\'{a}sz, Miller and Zemke, and denoted by $\mathrm{Ord}(K)$.
The other, $\mathrm{Ord}'(K)$, introduced by Gong and Marengon,  is similarly defined for the $\mathbb{F}_2[U]$-torsion submodule of  the unoriented knot Floer homology $\mathrm{HFK}'(K)$.

For both torsion orders, it is known that arbitrarily large values are realized by torus knots.
In this paper, we prove that they can be realized by hyperbolic knots, most of which are twisted torus knots.
Two torsion orders are argued in a unified way by using the Upsilon torsion function introduced by Allen and Livingston.
We also give the first  infinite family of hyperbolic  knots which shares a common Upsilon torsion function.
%As a byproduct, we determine the bridge number of twisted torus knots $T(p,pk+1;2,1)$.
%Also, we discuss the band unlinking number of these twisted torus knots.

\end{abstract}

\keywords{twisted torus knot, torsion order, Upsilon torsion function,  knot Floer homology}

\maketitle

%    Text of article.

%%%%%%%%%%%%%%%%%%%%%%%%%%%%%%%%%%%%%%%%%%%
\section{Introduction}\label{sec:intro}

There are two types of torsion order in knot Floer homology.
The first one is introduced by Juh\'{a}sz, Miller and Zemke \cite{JMZ}.
Recall that the minus version of knot Floer homology $\mathrm{HKF}^-(K)$ is a finitely generated module over
the polynomial ring $\mathbb{F}_2[U]$.
Let us denote $\mathrm{Tor}(\mathrm{HFK}^-(K))$ its $\mathbb{F}_2[U]$-torsion submodule.
Then the torsion order of a knot $K$ is defined as
\[
\mathrm{Ord}(K)=\min \{ k\ge 0 \mid U^k\cdot \mathrm{Tor}(\mathrm{HFK}^-(K))=0  \} \in \mathbb{N}\cup \{0\}.
\]
Of course, for the unknot $O$, $\mathrm{Ord}(O)=0$.
Since knot Floer homology detects the unknot \cite{OS0}, $\mathrm{Ord}(K)\ge 1$ when $K$ is non-trivial.
For example,  for the torus knot $T(p,q)$ with $1<p<q$, $\mathrm{Ord}(T(p,q))=p-1$ \cite{JMZ}.
Hence arbitrarily large values of torsion order can be realized by torus knots.
There are  several applications for knot cobordisms. See also \cite{HKP}.

The second is similarly defined  in \cite{GM} by using the torsion submodule of Ozsv\'{a}th, Stipsicz and Szab\'{o}'s unoriented knot Floer homology $\mathrm{HFK}'(K)$, 
which is also a module over $\mathbb{F}_2[U]$ (\cite{OSS}),
instead of $\mathrm{HFK}^-(K)$.
Hence
\[
\mathrm{Ord}'(K)=\min \{ k\ge 0 \mid U^k\cdot \mathrm{Tor}(\mathrm{HFK}'(K))=0  \} \in \mathbb{N}\cup \{0\}.
\]
Again, $\mathrm{Ord}'(K)=0$ if and only if $K$ is trivial.
(For, $\mathrm{HFK}'(O)=\mathbb{F}_2[U]$, which is torsion-free \cite[Corollary 2.15]{OSS}.
Conversely,
if $\mathrm{HFK}'(K)$ is torsion-free, then $\mathrm{HFK}'(K)=\mathbb{F}_2[U]=
\mathrm{HFK}'(O)$  \cite[Proposition 3.5]{OSS}.
So,  the unoriented knot Floer complexes $\mathrm{CFK}'(K)$ and  $\mathrm{CFK}'(O)$ 
share the same homology, which
implies chain homotopy equivalence between them \cite[Proposition A.8.1]{OSS2}.
Since setting $U=0$ reduces the complex into the hat version of knot Floer complex
\cite[Proposition 2.4]{OSS}, we have
$\widehat{\mathrm{HFK}}(K)\cong \widehat{\mathrm{HFK}}(O)$ by \cite[Proposition A.3.5]{OSS2}.
This implies $K=O$.)

Gong and Marengon \cite[Lemma 7.1]{GM} verify $\mathrm{Ord}'(T(p,p+1))=\lfloor \frac{p}{2} \rfloor$.
Hence arbitrarily large values of this torsion order can  be realized by torus knots, again.

As shown in \cite{AL}, two types of torsion order can be unified
in terms of the Upsilon torsion function $\Upsilon^{\mathrm{Tor}}_K(t)$,
 which is a piecewise linear continuous function defined on the interval $[0,2]$.
The derivative of $\Upsilon^{\mathrm{Tor}}_K(t)$ near $0$ equals to $\mathrm{Ord}(K)$, and
$\Upsilon^{\mathrm{Tor}}_K(1)=\mathrm{Ord}'(K)$.
We remark that the Upsilon torsion function and two types of torsion order are not concordance invariats.

%%%

The main purpose of this paper is to confirm that arbitrarily large values of 
these two types of torsion order can be realized by hyperbolic knots.
Except a few small values, we make use of twisted torus knots.

\begin{theorem}\label{thm:main}
Let $K$ be a twisted torus knot $T(p,kp+1;2,1)$ with $k\ge 1$.
\begin{itemize}
\item[(1)]  If $p\ge 2$, then $\mathrm{Ord}(K)=p-1$.
\item[(2)] If $p\ge 4$, then $\mathrm{Ord}'(K)=\lfloor\frac{p-2}{2}\rfloor$.
\end{itemize}
\end{theorem}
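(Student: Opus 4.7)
The plan is to compute the Upsilon torsion function $\Upsilon^{\mathrm{Tor}}_K(t)$ of $K = T(p,kp+1;2,1)$ and read off both torsion orders, using $\mathrm{Ord}(K) = (\Upsilon^{\mathrm{Tor}}_K)'(0^+)$ and $\mathrm{Ord}'(K) = \Upsilon^{\mathrm{Tor}}_K(1)$ as recalled in the introduction. Treating both statements simultaneously through a single computation is the advantage of using the Upsilon torsion function, exactly as the paper advertises.

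The first step is to establish that $K$ is an L-space knot, so that its full knot Floer complex $\mathrm{CFK}^-(K)$ has the standard staircase form determined by the Alexander polynomial. The family $T(p,q;2,s)$ has been classified (notably by Vafaee) with respect to the L-space property, and the parameters $q = kp+1$, $s = 1$ fit within the known L-space range for all $p \ge 2$ and $k \ge 1$. Once L-space-knot status is in hand, the Ozsv\'ath--Szab\'o structure theorem gives a completely explicit staircase model for $\mathrm{CFK}^-(K)$.

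Next I would compute $\Delta_K(t)$ explicitly, either from a Seifert matrix built from a natural genus-minimizing surface for the twisted torus knot, or through the skein relation at the extra two-strand twist region relating $\Delta_K$ to $\Delta_{T(p,kp+1)}$ and an auxiliary link. Writing $\Delta_K(t) = \sum_{i=0}^{2n} (-1)^i t^{\alpha_i}$ with $\alpha_0 > \alpha_1 > \cdots > \alpha_{2n}$, the consecutive gaps $\alpha_i - \alpha_{i+1}$ give the horizontal and vertical step sizes of the staircase. I would then feed this staircase into the Allen--Livingston construction of $\Upsilon^{\mathrm{Tor}}_K$, which for a staircase complex is an explicit piecewise linear function of the step sizes, and verify directly that (i) the initial slope — which records the largest step — equals $p-1$, giving part (1); and (ii) the value at $t=1$ equals $\lfloor (p-2)/2 \rfloor$, giving part (2).

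The main obstacle will be the evaluation at $t=1$. The target $\lfloor (p-2)/2 \rfloor$ is exactly one less than the Gong--Marengon value $\lfloor p/2 \rfloor$ for the comparable torus knot $T(p,p+1)$, so the proof must pin down how the extra full twist on two strands perturbs the Alexander exponent sequence so as to decrease $\Upsilon^{\mathrm{Tor}}_K(1)$ by exactly one while leaving the maximum step size unchanged. This bookkeeping on how the staircase for $T(p,kp+1;2,1)$ differs from that of the ambient torus knot — and in particular why the perturbation affects $\Upsilon^{\mathrm{Tor}}_K(1)$ but not its slope at the origin — is the delicate part of the argument; the remaining steps are essentially mechanical once the staircase is known.
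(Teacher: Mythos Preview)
Your proposal is correct and matches the paper's approach essentially step for step: the paper establishes the L--space property via Vafaee, computes $\Delta_K$ (using Morton's closed formula rather than a Seifert matrix or skein), extracts the gap sequence, and then carries out the Allen--Livingston change-of-basis procedure on the resulting staircase to obtain the full piecewise-linear $\Upsilon^{\mathrm{Tor}}_K$, from which both torsion orders are read off exactly as you describe. The paper's main effort, like your anticipated ``delicate part,'' is indeed the case-by-case filtration analysis of the staircase needed to pin down $\Upsilon^{\mathrm{Tor}}_K(1)$; for part~(1) the paper also notes, as you do implicitly, that for an L--space knot $\mathrm{Ord}(K)$ is simply the largest Alexander gap, so that part follows directly once the gap sequence is known.
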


%(1) is an immediate consequence of the known fact that
%$K$ is an L--space knot, and so $\mathrm{Ord}(K)$ is equal to the longest gap
%between the gaps in the exponents of the Alexander polynomial.
%Hence the only remaining task is just the explicit determination of the gaps of Alexander polynomial.
%To verify (2), we determine the Upsilon torsion function of $K$.
%Its process also needs the fact that $K$ is an L--space knot and the gaps of Alexander polynomial.

Unfortunately, a twisted torus knot $T(p,kp+1;2,1)$ is not hyperbolic when $p<5$ (see Proposition \ref{prop:hyp}).
However, an additional argument gives the following.

\begin{corollary}\label{cor:main}
Let $N\ge 1$ be a positive integer.
Then there exist infinitely many hyperbolic knots $K_1$ and $K_2$ with $\mathrm{Ord}(K_1)=N$ and $\mathrm{Ord}'(K_2)=N$.
\end{corollary}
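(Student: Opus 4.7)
The plan is to extract the bulk of the corollary directly from Theorem \ref{thm:main} combined with Proposition \ref{prop:hyp}, and then supply a separate small-case argument only for the few values of $N$ that are not yet covered. For $\mathrm{Ord}'(K_2)=N$ with any $N\ge 1$, I would set $p=2N+3$, so that $p\ge 5$ and $\lfloor (p-2)/2\rfloor=N$, and let $K_2$ range over the infinite family $\{T(p,kp+1;2,1):k\ge 1\}$. Proposition \ref{prop:hyp} then guarantees hyperbolicity of each member, Theorem \ref{thm:main}(2) gives $\mathrm{Ord}'(K_2)=N$, and pairwise distinctness of infinitely many members follows from a classical invariant such as the Seifert genus or the degree of the Alexander polynomial, which changes with $k$. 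This extra step is forced by the fact (as the abstract emphasizes) that the Upsilon torsion function itself is constant along the family and hence cannot distinguish its members.

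For $\mathrm{Ord}(K_1)=N$ with $N\ge 4$, the identical recipe applies with $p=N+1\ge 5$: the family $\{T(p,kp+1;2,1):k\ge 1\}$ yields infinitely many hyperbolic knots with $\mathrm{Ord}(K_1)=p-1=N$ by Theorem \ref{thm:main}(1) and Proposition \ref{prop:hyp}, with distinctness handled exactly as above. This leaves only the three exceptional values $N\in\{1,2,3\}$ still to be treated.

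For $N\in\{1,2,3\}$ a separate hyperbolic family is needed, since $p=N+1\le 4$ is forbidden by Proposition \ref{prop:hyp}. The corollary only asks for knots with a particular $\mathrm{Ord}$ value, not for full control of the knot Floer complex, so there is considerable flexibility. For $N=1$ any infinite family of hyperbolic thin knots (most twist knots, for example) works, since thin knots have $\mathrm{Ord}\le 1$ while nontriviality forces $\mathrm{Ord}\ge 1$. For $N=2$ and $N=3$ a natural route is to consider alternative twisted torus knot parameters $T(p,q;r,s)$ with $(r,s)\ne(2,1)$, or specific hyperbolic $L$-space knots whose Alexander polynomial exhibits exactly the required gap structure, and to generate an infinite subfamily by a twisting operation that visibly preserves $\mathrm{Ord}$. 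The main obstacle I anticipate is precisely this finite-case analysis: one must simultaneously guarantee hyperbolicity and pin down the exact torsion order for each of the three missing values, which in practice will require a further Upsilon torsion computation in the spirit of Theorem \ref{thm:main}.
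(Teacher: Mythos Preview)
Your overall strategy coincides with the paper's: use $p=2N+3$ for $\mathrm{Ord}'$, use $p=N+1$ for $\mathrm{Ord}$ when $N\ge 4$, distinguish the knots by genus, and treat $N\in\{1,2,3\}$ separately. Your $N=1$ argument via thin knots is essentially the paper's, which takes hyperbolic $2$-bridge knots and invokes the bound $\mathrm{Ord}(L)\le b(L)-1$ from \cite{JMZ}.

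The only real gap is the one you flag yourself, namely $N=2$ and $N=3$. You anticipate that closing it will require a further Upsilon torsion computation in the style of Theorem~\ref{thm:main}, but in fact the paper avoids any new computation and simply cites existing families from the literature. For $N=2$ it takes $K_1=T(3,4;2,s)$ with $s\ge 2$: these are hyperbolic by \cite{L1,L2}, are $L$-space knots by \cite{V}, and have $\mathrm{Ord}=2$ by the twist-positivity argument of \cite{KM}; varying $s$ changes the genus, giving infinitely many. For $N=3$ it uses the infinite family $\{k_n\}$ of hyperbolic $L$-space knots from \cite{BK}, whose torsion order is shown to be $3$ in \cite{KM}. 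So the residual cases are handled by citation rather than by extending the analysis of Section~\ref{sec:upsilon-torsion}, and the obstacle you anticipate does not actually arise.
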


\begin{corollary}\label{cor:main2}
There exist infinitely many hyperbolic knots that share the same Upsilon torsion function.
\end{corollary}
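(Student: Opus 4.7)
The plan is to extract this corollary from the family already used in Theorem \ref{thm:main}. Fix $p \geq 5$, so that by Proposition \ref{prop:hyp} every knot in $\{T(p, kp+1; 2, 1)\}_{k \geq 1}$ is hyperbolic. It then suffices to show that infinitely many members of this family share a common Upsilon torsion function.

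First I would compute $\Upsilon^{\mathrm{Tor}}_{K}(t)$ explicitly for $K = T(p, kp+1; 2, 1)$, using the same information about $\mathrm{CFK}^-(K)$ that is needed to extract $\mathrm{Ord}(K)$ and $\mathrm{Ord}'(K)$ in Theorem \ref{thm:main}. Since by \cite{AL} these two torsion orders are encoded in $\Upsilon^{\mathrm{Tor}}_K$ at $t \to 0^+$ and at $t=1$, it is reasonable to expect that the whole piecewise linear function on $[0,2]$ falls out of the same analysis. The key claim to establish is that this function depends only on $p$: the twisting parameter $k$ should affect only the torsion-free staircase summand of $\mathrm{CFK}^-(K)$, whereas $\Upsilon^{\mathrm{Tor}}_K$ is determined by the $\mathbb{F}_2[U]/U^{a_i}$ torsion summands together with their Alexander gradings, which should be $k$-independent. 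Granting this, infinitely many knots in the family share a common Upsilon torsion function. To confirm they are pairwise non-isotopic, any $k$-sensitive classical invariant suffices; for instance, the Seifert genus (or the degree of the Alexander polynomial) of $T(p, kp+1; 2, 1)$ grows linearly in $k$, so distinct values of $k$ give distinct knots.

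The main obstacle is establishing the $k$-independence of the torsion data. This is a genuine computation, not a formal consequence: one must read off the exponents $a_i$ and the relative Alexander gradings of the torsion generators from the filtered chain complex of $T(p, kp+1; 2, 1)$ and verify that neither varies with $k$. Should some aspect of the torsion data depend on $k$ in an unforeseen way, a pigeonhole fallback is available: the values of $\Upsilon^{\mathrm{Tor}}_K$ at $t \to 0^+$ and $t=1$ are already fixed by Theorem \ref{thm:main}, and with any control on the bounded slopes and the rational breakpoint locations, only finitely many candidate functions could occur, forcing an infinite constant subsequence in $k$.
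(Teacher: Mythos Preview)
Your overall strategy---fix $p\ge 5$, invoke Proposition~\ref{prop:hyp} for hyperbolicity, compute $\Upsilon^{\mathrm{Tor}}_K$ explicitly for $K=T(p,kp+1;2,1)$, observe $k$-independence, and separate the knots by genus---is exactly the paper's approach. The explicit computation you propose is carried out as Theorem~\ref{thm:upsilon-torsion}, and the genus argument appears in the proof of Corollary~\ref{cor:main}.

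That said, your heuristic for \emph{why} the function should be $k$-independent is incorrect and would mislead an actual computation. For an L--space knot the whole of $\mathrm{CFK}^\infty$ is a single staircase, and for these knots that staircase genuinely depends on $k$: by Corollary~\ref{cor:gap} the step pattern is $(1,p-1)^k,(1,1,1,p-3)^k,\dots$, so the number of torsion summands $\mathbb{F}_2[U]/U^{a_i}$ in $\mathrm{HFK}^-(K)$ grows linearly in $k$, and their Alexander gradings shift as well. It is not true that ``$k$ affects only the torsion-free staircase summand.'' What \emph{is} $k$-independent is the set of step lengths that occur; since $\Upsilon^{\mathrm{Tor}}_K(t)$ records, after the change-of-basis procedure of Section~\ref{sec:upsilon-torsion}, the \emph{maximum} filtration-level difference among the resulting arrows, adding more copies of the same arrow types (which is what increasing $k$ does) leaves that maximum unchanged. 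This is a feature of the Upsilon torsion function being a max rather than any invariance of the torsion module itself. Your pigeonhole fallback is also not a valid rescue: fixing the slope near $0$ and the value at $1$ does not, without further input, bound the number of piecewise-linear functions on $[0,2]$ that could arise.
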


We pose a simple question.

\begin{question}
Let $M$ and $N$ be positive integers.
Does there exist a knot $K$ with $(\mathrm{Ord}(K), \mathrm{Ord}'(K))=(M,N)$?
\end{question}

\begin{remark}
For two types of torsion order, 
the original symbols are  $\mathrm{Ord}_v(K)$ and  $\mathrm{Ord}_U(K)$ (see \cite{JMZ,GM}).
\end{remark}

%%%

%As a classical result, the bridge number of a torus knot $T(p,q)$ $(1<p<q)$ is well known to  be $p$ \cite{Sc}.
%However, it is very hard to determine the bridge number of twisted torus knots, except the case where they are reduced to torus knots.
%As far as we know, the only known result is  for the case where $T(p,q;r,s)$ with $1<r<p<q$ and $|s|>18p$.
%Bowman, Taylor and Zupan \cite{BTZ} verify that its bridge number is $p$.

%On the other hand,
%the torsion order $\mathrm{Ord}(K)$ of a knot $K$ gives a lower bound for the bridge number  $b(K)$
%of $K$.
%More precisely, Juh\'{a}sz, Miller and Zemke \cite{JMZ} show $\mathrm{Ord}(K)\le b(K)-1$.
%Since our twisted torus knot $T(p,kp+1;2,1)$ is the closure of a $p$-string braid,
%its bridge number is at most $p$.
%Thus Theorem \ref{thm:main} immediately gives the next.

%\begin{corollary}
%Let $p\ge 2$ and $k\ge 1$.
%The bridge number of a twisted torus knot $T(p,kp+1;2,1)$ is equal to $p$.
%\end{corollary}

%%%%%%
\section{Twisted torus knots}

A twisted torus knot is obtained from a torus knot of type $(p,q)$ by twisting  $r$ adjacent strands by $s$ full twists.
The resulting knot is denoted by $T(p,q;r,s)$ as in literatures \cite{L0,L1,L2,L3}.

Throughout this section, let $K$ be the twisted torus knot $T(p,kp+1;2,1)$ with $p\ge 2, k\ge 1$.
Clearly, if $p=2$, then $T(2,2k+1;2,1)=T(2,2k+3)$.
Also, Lee \cite{L2,L3} shows that $T(3,3k+1;2,1)=T(3,3k+2)$, and
$T(4,4k+1;2,1)$ is the $(2,8k+3)$-cable of $T(2,2k+1)$.
We will show later  that $T(p,kp+1;2,1)$ is hyperbolic if $p\ge 5$ (Proposition \ref{prop:hyp}).
Since these knots are the closure of a positive braid, it is fibered by \cite{S}.
In particular, the Seifert algorithm on a positive braided diagram gives a fiber, which is a minimal genus Seifert surface.
Thus we know that it has genus $(kp^2-kp+2)/2$.
Hence $K$ is non-trivial.

\begin{lemma}\label{lem:tunnel}
$K$ is an L--space knot.%, and so $K$ is prime.
\end{lemma}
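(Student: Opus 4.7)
The plan is to show $K = T(p, kp+1; 2, 1)$ is an L-space knot by splitting on the value of $p$, invoking in each regime a known classification result. The uniform strategy is to appeal to Vafaee's work on L-space twisted torus knots of the form $T(p, q; 2, s)$, which produces L-space knots when $q$ satisfies suitable congruences modulo $p$. The residue $q = kp+1 \equiv 1 \pmod p$ lies in the extreme class that such results typically single out, so the hypothesis is met in principle.

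First, I would dispose of the small cases using identifications already recorded in this section. For $p \in \{2,3\}$, the knot $K$ is itself a torus knot, hence an L-space knot by Ozsv\'{a}th--Szab\'{o}'s classical computation. For $p = 4$, Lee's identification realizes $K$ as the $(2, 8k+3)$-cable of $T(2, 2k+1)$; since $T(2, 2k+1)$ is an L-space knot and the cabling slope $8k+3$ is sufficiently large relative to the genus, Hom's cabling theorem yields that the cable is again an L-space knot.

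For $p \geq 5$, I would invoke Vafaee's theorem directly, verifying that the parameters $(p, kp+1, 2, 1)$ lie within its hypotheses. The main obstacle is locating a reference whose statement genuinely covers the full parameter range, in particular the small twist $s = 1$. If no direct citation is clean, the fallback is to construct an explicit L-space surgery on $K$: since $K$ is the closure of a positive braid of Seifert genus $(kp^2 - kp + 2)/2$, one can take a sufficiently large positive integer surgery, and verify via the positive braid structure and a computation of the Alexander polynomial that this surgery is an L-space. By the standard surgery characterization of L-space knots, this would imply that $K$ itself is an L-space knot, completing the argument.
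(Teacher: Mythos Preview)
Your core approach---citing Vafaee---is exactly what the paper does, and the paper's entire proof is the single sentence ``This follows from \cite{V}.'' Vafaee's theorem on twisted torus knots $T(p,kp\pm 1;r,s)$ covers the case $r=2$, $s=1$ uniformly for all $p\ge 2$ and $k\ge 1$, so your case split on $p$ is unnecessary: there is no need to handle $p\in\{2,3\}$ via torus knots or $p=4$ via Hom's cabling criterion, and your worry about whether $s=1$ falls within the hypotheses is unfounded.

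One caution about your fallback. The sketch ``take a large positive surgery, use the positive braid structure and the Alexander polynomial to verify it is an L-space'' is not a valid argument as written: being a positive braid closure and having a particular Alexander polynomial does not by itself certify that a given surgery is an L-space. You would still need to compute $\widehat{\mathrm{HF}}$ of the surgered manifold (or equivalently show $\widehat{\mathrm{HFK}}(K)$ is thin in the appropriate sense), which is precisely the content of the L-space knot condition you are trying to establish. So the fallback, if ever needed, would require substantially more work than indicated.
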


\begin{proof}
This follows from \cite{V}.% and \cite{K}.
\end{proof}

%\begin{lemma}\label{lem:Lspace}
%Let $p,k,s\ge 1$.
%Twisted torus knots $T(p,kp+1;2,1)$ and $T(3,3k+1;2,s)$  are L--space knots.
%\end{lemma}

%\begin{proof}
%This is a result of Vafaee \cite{V}.
%(He shows that $T(p,kp\pm 1; r,s)$ is an L--space knot if and only if either $r=p-1$, or $r\in \{2,p-2\}$ and $s=1$.)
%\end{proof}

\begin{lemma}\label{lem:alex}
The Alexander polynomial $\Delta_K(t)$ of $K$ is given by
\[
\Delta_K(t)=
\begin{cases}
\begin{aligned}
1&+\sum_{i=1}^k(-t +t^{p})t^{(i-1)p}\\
&+\sum_{i=1}^{p-3} \sum_{j=1}^{k}(   -t^{ikp+1}+t^{ikp+2}-t^{ikp+2+i}+t^{(ik+1)p}  ) t^{(j-1)p}\\
&+\sum_{i=1}^{k+1}(      -t^{kp(p-2)+1} + t^{kp(p-2)+2}        )t^{(i-1)p},
\end{aligned}
 & \text{ if $p\ge 3$},\\
1-t+t^2-\dots +t^{2k+2},  &\text{if $p=2$.}
\end{cases}
\]
\end{lemma}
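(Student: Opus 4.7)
The plan is to compute $\Delta_K(t)$ directly from the positive braid presentation $\beta = (\sigma_1\sigma_2\cdots\sigma_{p-1})^{kp+1}\sigma_1^2$ using the reduced Burau representation $\bar{\rho}\colon B_p \to GL_{p-1}(\mathbb{Z}[t,t^{-1}])$. Up to units,
\[
\Delta_K(t) = \frac{\det(I - \bar{\rho}(\beta))}{1 + t + \cdots + t^{p-1}}.
\]
The key simplification is that $(\sigma_1\cdots\sigma_{p-1})^p$ is the full twist and acts as the scalar $t^p$ in $\bar{\rho}$, so the torus factor $(\sigma_1\cdots\sigma_{p-1})^{kp+1}$ reduces to $t^{kp}\cdot\bar{\rho}(\sigma_1\cdots\sigma_{p-1})$, a companion-type matrix with monomial entries. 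The perturbation $\bar{\rho}(\sigma_1^2)$ affects only the top two rows, so the $(p-1)\times(p-1)$ determinant can be expanded along the first column with only a handful of nonzero cofactors to track.

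Once this expansion is carried out and divided by $1 + t + \cdots + t^{p-1}$, grouping the quotient's monomials by the residue of their exponent modulo $p$ should recover the three blocks of the claimed formula: the leading $1$, the block $\sum_{i=1}^k(-t + t^p)t^{(i-1)p}$, the double sum over $1 \le i \le p-3$ and $1 \le j \le k$, and the trailing block involving $t^{kp(p-2)+1}$ and $t^{kp(p-2)+2}$. Two structural checks reinforce the identification. By Lemma~\ref{lem:tunnel}, $K$ is an L--space knot, so $\Delta_K(t)$ must have coefficients in $\{-1,0,1\}$ alternating in sign, which the right-hand side visibly satisfies. Moreover, the top degree of the right-hand side is $kp(p-2)+2+kp = kp(p-1)+2$, which matches twice the fiber genus $(kp^2-kp+2)/2$ noted after the braid description of $K$.

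The case $p=2$ is immediate from the identification $T(2,2k+1;2,1) = T(2,2k+3)$ recalled at the start of this section, since the torus knot $T(2,2k+3)$ has the classical Alexander polynomial $1 - t + t^2 - \cdots + t^{2k+2}$. The main obstacle in the general computation is the combinatorial bookkeeping of the Burau determinant: one must carefully track how the perturbation contributed by $\sigma_1^2$ interacts with the shifted copies $t^{(j-1)p}$ coming from the full-twist factor $\Delta^{2k}$, especially near the residue-class boundaries where the alternating signs flip. An alternative inductive approach on $k$, exploiting a full-twist skein identity relating $T(p,(k+1)p+1; 2,1)$ to $T(p, kp+1; 2,1)$, would let the three-block structure propagate visibly from step to step, but it still needs a nontrivial base case of essentially the same character as the direct calculation above.
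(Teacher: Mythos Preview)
This is a plan, not a proof. You outline a sound strategy --- compute $\det(I-\bar\rho(\beta))$ via the reduced Burau representation, exploit that the full twist acts as the scalar $t^p$, expand along the first column --- but you never actually perform the determinant expansion, never carry out the division by $1+t+\cdots+t^{p-1}$, and never match the resulting monomials to the claimed three-block formula. The sentence ``Once this expansion is carried out \dots\ grouping the quotient's monomials \dots\ \emph{should} recover the three blocks'' is precisely where the proof needs to happen, and it is absent. The consistency checks you offer (alternating $\pm1$ coefficients because $K$ is an L--space knot, correct top degree) are necessary but nowhere near sufficient to pin down $\Delta_K(t)$; many polynomials satisfy both. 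You even acknowledge that ``the main obstacle \dots\ is the combinatorial bookkeeping of the Burau determinant,'' which is an admission that the obstacle has not been overcome.

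The paper's proof avoids this bookkeeping entirely by invoking Morton's closed formula \cite{Mo} for the Alexander polynomial of a twisted torus knot, which in your notation already gives
\[
\Delta_K(t)=\frac{1-t}{1-t^p}\Bigl(1+t^{kp+2}\sum_{i=0}^{p-3}t^{i(kp+1)}+t^{(p-1)(kp+1)+2}\Bigr).
\]
From there the paper simply names the three sums in the statement $A$, $B$, $C$ and verifies the polynomial identity $(1-t^p)(1+A+B+C)=(1-t)(\cdots)$ by direct telescoping. That verification is routine; the real content is Morton's formula, which replaces your unperformed Burau calculation. If you want to salvage your approach, you must either actually compute the Burau determinant and simplify it (feasible but tedious), or cite a source that has already done so --- which is exactly what the paper does.
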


\begin{proof}
When $p=2$, it is well known that
\[
\Delta_K(t)=\frac{ (1-t) (1-t^{2(2k+3)}) } { (1-t^2) (1-t^{2k+3} ) }=1-t+t^2-\dots +t^{2k+2},
\]
since $K=T(2,2k+3)$ as mentioned before.

Assume $p\ge 3$.
The conclusion essentially follows from \cite{Mo}.
In his notation, our knot $K$ is $\Delta(p,kp+1,2)$ with $r=p-1$.
Hence
\begin{align*}
\Delta_K(t) &=\frac{1-t} {(1-t^p)(1-t^{kp+1}) }\cdot  \\
& \qquad ( 1- (1-t) ( t^{(p-1)(kp+1)+1} + t^{kp+1} ) -t^{p(kp+1)+2}   ).
\end{align*}

The second factor is changed as
\[
\begin{split}
 1 &- (1-t) ( t^{(p-1)(kp+1)+1} + t^{kp+1} ) -t^{p(kp+1)+2} =
 1- t^{(p-1)(kp+1)+1}- t^{kp+1}\\
 &  +  t^{(p-1)(kp+1)+2}  + t^{kp+2} -t^{p(kp+1)+2} = (1-t^{kp+1})+\\
 &+t^{kp+2}(1-t^{ (kp+1)(p-2)})+ t^{(p-1)(kp+1)+2}(1-t^{kp+1}).
\end{split}
\]
Thus
\[
\Delta_K(t)=\frac{1-t}{1-t^p}\cdot ( 1 +t^{kp+2} \sum_{i=0}^{p-3} t^{i(kp+1)}+t^{(p-1)(kp+1)+2} ) .
\]

We set 
\begin{align*}
A&=\sum_{i=1}^k(-t +t^{p})t^{(i-1)p}, \\
B&=\sum_{i=1}^{p-3} \sum_{j=1}^{k}(   -t^{ikp+1}+t^{ikp+2}-t^{ikp+2+i}+t^{(ik+1)p}  ) t^{(j-1)p}, \\
C&=\sum_{i=1}^{k+1}(      -t^{kp(p-2)+1} + t^{kp(p-2)+2}        )t^{(i-1)p}.
\end{align*}
Then it is straightforward to calculate
\begin{align*}
(1-t^p)A&=  -t+t^p+t^{kp+1}-t^{(k+1)p},\\
(1-t^p)B&=  -t^{kp+1}+t^{(k+1)p}+(1-t) \sum_{i=0}^{p-3} t^{(i+1)kp+i+2}+t^{kp(p-2)+1}-t^{kp(p-2)+2}, \\
(1-t^p)C&= -t^{kp(p-2)+1} + t^{kp(p-2)+2} + t^{kp(p-2)+1+(k+1)p} - t^{kp(p-2)+2+(k+1)p}.
\end{align*}
Hence
\[
\begin{split}
(1-t^p)(1+A+B+C)&=1-t +(1-t) \sum_{i=0}^{p-3} t^{(i+1)kp+i+2} \\
&\qquad  +(1-t) t^{kp(p-2)+1+(k+1)p}\\
&=(1-t)( 1+ t^{kp+2}\sum_{i=0}^{p-3}t^{i(kp+1)}+t^{ (p-1)(kp+1)+2  }).
\end{split}
\]
This shows that $\Delta_K(t)=1+A+B+C$ as desired.
\end{proof}

\begin{corollary}\label{cor:gap}
The gaps of the exponents of the Alexander polynomial of $K$ are
\[
(1,p-1)^k,(1,1,1,p-3)^k,(1,1,2,p-4)^k,\dots, (1,1,p-3,1)^k,1,1,(p-1,1)^k\]
%
%\[
%(1,p-1)^k,1, (1,1,p-3,1)^k, (1,2,p-4,1)^k,\dots, (1,p-3,1,1)^k,1,(p-1,1)^k,
%\]
if $p\ge 3$, and
$1^{2k+2}$ if $p=2$.
Here, the power indicates the repetition.
(We remark that  the above sequence is $(1,2)^k,1,1,(2,1)^k$ when $p=3$.)
\end{corollary}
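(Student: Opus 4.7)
The plan is to read the exponents directly off the closed-form expression for $\Delta_K(t)$ given by Lemma \ref{lem:alex}, arrange them in increasing order, and compute the successive differences. The case $p=2$ is immediate: the polynomial equals $1-t+t^2-\cdots+t^{2k+2}$, whose $2k+3$ exponents $0,1,\ldots,2k+2$ produce $2k+2$ unit gaps.

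For $p\ge 3$ I would organise the calculation by the three summands $1+A$, $B$, and $C$. The piece $1+A$ contributes the exponents $0, 1, p, p+1, \ldots, (k-1)p, (k-1)p+1, kp$ lying in $[0,kp]$, whose successive differences already read $(1,p-1)^k$. The piece $C$ contributes, for each $i=1,\ldots,k+1$, the pair $\{kp(p-2)+(i-1)p+1,\ kp(p-2)+(i-1)p+2\}$; once one accounts for the unit jump $kp(p-2)\to kp(p-2)+1$ entering $C$ from the previous block, these produce the closing pattern $1,1,(p-1,1)^k$. For each fixed $i\in\{1,\ldots,p-3\}$, the piece $B$ contributes, for each $j=1,\ldots,k$, the quadruple
\[
ikp+(j-1)p+1,\ ikp+(j-1)p+2,\ ikp+(j-1)p+2+i,\ ikp+jp,
\]
all of which lie in $[ikp+1,(i+1)kp]$. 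Inside a given $j$-group the three gaps read $(1,i,p-2-i)$, the jump from the $j$-th group to the $(j+1)$-th is a unit, and the jump entering the $i$-th block from the previous one is also a unit, giving the pattern $(1,1,i,p-2-i)^k$ for that block. Concatenating over $i=1,\ldots,p-3$ yields the claimed middle portion, and when $p=3$ this range is vacuous and the sequence collapses to $(1,2)^k,1,1,(2,1)^k$.

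The main care needed is to verify that no terms in $1+A+B+C$ collide or cancel, and that each inter-block transition is a gap of exactly $1$. All of this reduces to the hypothesis $1\le i\le p-3$, which forces $2+i<p$ and hence places the ``middle'' exponent $ikp+(j-1)p+2+i$ strictly between $ikp+(j-1)p+2$ and $ikp+jp$, while the inequalities $kp<kp+1$ and $(p-2)kp<(p-2)kp+1$ take care of the two outer transitions. Alternatively, since $K$ is an L--space knot by Lemma \ref{lem:tunnel}, the Ozsv\'ath--Szab\'o theorem on Alexander polynomials of L--space knots guarantees a priori that $\Delta_K(t)$ has $\pm 1$ coefficients alternating along its exponent sequence; this rules out cancellation without any case analysis and reduces the proof to reading off exponents and subtracting.
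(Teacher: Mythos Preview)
Your proposal is correct and is exactly the computation the paper implicitly leaves to the reader: the corollary is stated without proof immediately after Lemma~\ref{lem:alex}, so the intended argument is precisely to read off the exponents from the decomposition $\Delta_K(t)=1+A+B+C$ and take successive differences, which you carry out accurately. Your closing remark that the L--space property (Lemma~\ref{lem:tunnel}) rules out cancellation a priori is a nice shortcut, though the direct inequalities you give already suffice.
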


%%%
To prove that our twisted torus knot $K=T(p,kp+1;2,1)$ is hyperbolic when $p\ge 5$,
we give a more general result by using \cite{I}.
A knot $k$ is called a \textit{fully positive braid knot\/} 
if it is the closure of a positive braid which contains at least one full twist.

\begin{proposition}\label{prop:full}
Let $k$ be a fully positive braid knot.
If $k$ is a tunnel number one, satellite knot, then $k$ is a cable knot.
\end{proposition}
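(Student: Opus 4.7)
The plan is to combine three ingredients: the fiberedness that comes for free from the positive braid structure, the Morimoto--Sakuma classification of satellite knots of tunnel number one, and Ichihara's result \cite{I} on essential tori in complements of positive braid knots.

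First, I will record that a fully positive braid knot $k$ is fibered by Stallings' theorem, and the Seifert algorithm on a positive braid diagram produces a minimal genus fiber surface that already contains the full twist. A standard consequence (Hirasawa--Murasugi--Silver, or direct from the characterisation of fibered knots via sutured manifold theory) is that in any satellite decomposition of a fibered knot, the companion $C$ is itself fibered and the pattern fibers compatibly; in particular the companion torus sits nicely with respect to the fibration.

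Next, I will invoke the Morimoto--Sakuma classification of tunnel number one satellite knots: every such knot $k$ is realised as a $1$-bridge knot inside a standard solid torus mapped homeomorphically onto a regular neighborhood of a companion $2$-bridge knot $C$. This leaves two cases. Either the pattern is a torus knot in the solid torus, so that $k$ is a cable of $C$ and we are done, or the pattern is a proper (non-cable) $(1,1)$-pattern.

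The main step, which I expect to be the main obstacle, is to eliminate the non-cable $(1,1)$-case. Here I appeal to Ichihara's result \cite{I}, which provides precisely the structural restriction I need: any essential torus in the exterior of a fully positive braid knot is a cabling torus. Feeding this back into the Morimoto--Sakuma output forces the pattern to be a cable pattern after all, so $k$ is a cable of $C$. Verifying the precise applicability of \cite{I} to the setting of fully positive braid knots --- checking the full twist hypothesis and the form of the companion torus --- is where most of the care will be needed; once that is done, the conclusion is immediate.
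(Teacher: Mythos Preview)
Your proposal has a genuine gap at the crucial step. The reference \cite{I} is Ito's preprint (not Ichihara's), and its Theorem~1.2 does \emph{not} assert that every essential torus in the exterior of a fully positive braid knot is a cabling torus. What Ito proves is only that the pattern of such a satellite can be represented by a positive braid in the solid torus --- a much weaker statement that does not by itself force the pattern to be a cable pattern. You have attributed to \cite{I} essentially the conclusion you are trying to prove.

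The paper's argument uses \cite{I} only for this weaker input, and then does real work. From Morimoto--Sakuma \cite{MS} the companion is a \emph{torus knot} $T(r,s)$ (not a general $2$-bridge knot as you wrote; the $2$-bridge object in \cite{MS} is the link that produces the pattern, not the companion), and the pattern arises from one unknotted component $K_1$ of a $2$-bridge link by adding $rs$ positive full twists in the solid torus. This lets one write the pattern explicitly as the closure of a positive $n$-braid of the form $\sigma_{i(1)}\cdots\sigma_{i(n-1)}(\sigma_1\cdots\sigma_{n-1})^{nrs}$ with $\{i(1),\dots,i(n-1)\}=\{1,\dots,n-1\}$. The missing key step is a short braid-group computation showing that any such word is conjugate to $(\sigma_1\cdots\sigma_{n-1})^{nrs+1}$; only then does one conclude that the pattern is the $(n,nrs+1)$-torus knot in the solid torus and hence that $k$ is a cable of $T(r,s)$. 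Your sketch skips this computation entirely, and the fiberedness remarks in your first paragraph, while correct, are not used.
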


\begin{proof}
By \cite{MS}, $k$ has a torus knot $T(r,s)$ as a companion.  We may assume that $1<r<s$.
Then Theorem 1.2 of \cite{I} claims that
the pattern $P$ is represented by  a positive braid in a solid torus.

Let us recall the construction of \cite{MS}.
Starting from a $2$-bridge link $K_1\cup K_2$,
consider the solid torus $E(K_2)$ containing $K_1$.
Remark that $K_1$ and $K_2$ are unknotted.
For the companion knot $T(r,s)$,
consider a homeomorphism from $E(K_2)$ to the tubular neighborhood of $T(r,s)$,
which sends the preferred longitude of $E(K_2)$ to the regular fiber of the Seifert fibration in the exterior of $T(r,s)$.
Hence our pattern knot $P$, which is defined under preserving preferred longitudes, 
 is obtained from $K_1$ by adding $rs$ positive full twists to $E(K_2)$.
Since $K_1$ is unknotted, we can set the pattern $P$ as the closure of a positive braid
\[
\sigma_{i(1)} \sigma_{i(2)} \dots \sigma_{i(n-1)} (\sigma_1 \sigma_2 \dots \sigma_{n-1})^{nrs}
\]
for some $n\ge 2$, where $\{i(1), i(2), \dots, i(n-1)\}=\{1,2,\dots, n-1\}$.
(If the initial part before $rs$ full twists contains more than $n-1$ generators,
then the Seifert algorithm gives a fiber surface of the closure $K_1$, which has positive genus.)

For two braids $\beta_1$ and $\beta_2$, we write $\beta_1\sim \beta_2$ if they are conjugate or equivalent.

\begin{claim}\label{cl:braid}
$\sigma_{i(1)} \sigma_{i(2)} \dots \sigma_{i(n-1)} (\sigma_1 \sigma_2 \dots \sigma_{n-1})^{nrs}
\sim (\sigma_1\sigma_2 \dots \sigma_{n-1})^{nrs+1}$.
\end{claim}

\begin{proof}[Proof of Claim \ref{cl:braid}]
Put $F=(\sigma_1 \sigma_2 \dots \sigma_{n-1})^{nrs}$, which is central in the braid group.
First,  write $\sigma_{i(1)} \sigma_{i(2)} \dots \sigma_{i(n-1)} F=U_1 \sigma_1 U_2 F$,  where
$U_i$ is a word without $\sigma_1$, which is possibly empty.
Then $U_1\sigma_1 U_2 F \sim   \sigma_1 U_2F U_1\sim \sigma_1 U_2U_1F$.
Next, set $U_2U_1=V_1\sigma_2 V_2$, where $V_i$ is a (possibly, empty) word without $\sigma_1, \sigma_2$.
Note that $\sigma_1$ and $V_1$ commute.
Then 
\[
\sigma_1 U_2U_1F =\sigma_1 V_1\sigma_2 V_2 F \sim V_1\sigma_1 \sigma_2 V_2F\sim \sigma_1\sigma_2V_2FV_1 \sim \sigma_1\sigma_2 V_2V_1F.
\]
Repeating this procedure,  we have the conclusion.
\end{proof}

Thus the pattern $P$ is the closure of a braid $(\sigma_1\sigma_2\dots \sigma_{n-1})^{nrs+1}$.
This means that $k$ is a cable knot.
\end{proof}

\begin{remark}
Lee \cite[Question 1.2]{L3} asks whether  $T(p,q;r, s)$ is a cable knot, if it is a satellite knot under a condition that
$1<p<q$, $r\ne q$, $r$ is not a multiple of $p$, $1<r\le p+q$ and $s>0$.
Proposition \ref{prop:full}
gives a positive answer if the knot has tunnel number one, which is known to be true when $r\in \{2,3\}$ (see \cite{JL}).
\end{remark}

%%%
\begin{proposition}\label{prop:hyp}
If $p\ge 5$, then $K$ is hyperbolic.
\end{proposition}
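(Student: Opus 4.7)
The plan is to apply Thurston's trichotomy: a non-trivial knot in $S^3$ is a torus knot, a satellite knot, or hyperbolic. Since $K$ has genus $(kp^2-kp+2)/2>0$ it is non-trivial, so it suffices to rule out the first two cases. Observe that $K$ is the closure of the positive braid $(\sigma_1\sigma_2\cdots\sigma_{p-1})^{kp+1}\sigma_1^{2}$, which contains the full twist $(\sigma_1\cdots\sigma_{p-1})^{p}$ as a subword since $k\ge 1$; hence $K$ is a fully positive braid knot in the sense of Proposition \ref{prop:full}. Moreover, since the twisting parameter is $r=2$, the result cited as \cite{JL} at the end of the previous remark gives that $K$ has tunnel number one.

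To rule out satellites, if $K$ were a satellite knot then Proposition \ref{prop:full} would force $K$ to be a cable knot $C_{r,s}(K')$ with $r\ge 2$ and $K'$ non-trivial. The Alexander polynomial would then factor as $\Delta_K(t)=\Delta_{K'}(t^r)\,\Delta_{T(r,s)}(t)$. Using Lemma \ref{lem:alex}, the low-order expansion is $\Delta_K(t)=1-t+t^{p}-t^{p+1}+\cdots$, and since $\Delta_{K'}(t^r)\equiv 1\pmod{t^r}$ with $r\ge 2$, matching the coefficient of $t$ forces the $t$-coefficient of $\Delta_{T(r,s)}$ to be $-1$, while the vanishing of the coefficients of $t^{2},\ldots,t^{p-1}$ in $\Delta_K$ pins down the low-degree structure of $\Delta_{T(r,s)}$ very tightly. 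Combined with the genus identity $r\,g(K')+(r-1)(s-1)/2=(kp^{2}-kp+2)/2$ and Hom's condition $s/r\ge 2g(K')-1$ for an L-space cable (which applies by Lemma \ref{lem:tunnel}), this yields a numerical contradiction when $p\ge 5$.

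To rule out torus knots, if $K=T(a,b)$ with $1<a<b$ and $\gcd(a,b)=1$, then the genus equation $(a-1)(b-1)=kp(p-1)+2$ leaves only finitely many coprime candidates. The gap sequence of Corollary \ref{cor:gap} eliminates each: for example $a=2$ gives a polynomial with all exponent gaps equal to $1$, contradicting the second gap $p-1\ge 4$ of $K$; the remaining pairs are dispatched by a direct coefficient comparison with the $3/(1+t+t^{2})$-style expansion of $\Delta_{T(a,b)}$. The main obstacle is the cable step: whereas the torus case is a finite check, the cable case \emph{a priori} admits any non-trivial companion $K'$, so the argument must extract enough information from the initial block of $\Delta_K$ (and, by symmetry, its mirror block near degree $2g(K)$) together with the genus identity to bound and then eliminate $(r,s)$. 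I expect the analysis must be pushed at least through the exponents $t^{p+1}$ and $t^{kp+1}$ in Lemma \ref{lem:alex}, where the jumps in the exponent sequence are most restrictive.
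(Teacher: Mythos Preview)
Your overall architecture (Thurston trichotomy, tunnel number one, Proposition~\ref{prop:full}) matches the paper's. However, the execution diverges at the two key steps and, as you yourself flag, is not complete.

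For the satellite case, you invoke only the \emph{statement} of Proposition~\ref{prop:full} (``$K$ is a cable'') and then propose to derive a contradiction from $\Delta_K(t)=\Delta_{K'}(t^r)\,\Delta_{T(r,s)}(t)$ together with genus and L--space cabling constraints. This analysis is not carried out; your closing sentence (``I expect the analysis must be pushed\ldots'') is an admission that it is not yet a proof. The paper avoids this entirely in two ways. First, the \emph{proof} of Proposition~\ref{prop:full} actually yields more than its statement: a tunnel-number-one fully positive braid satellite must be the $(n,nrs+1)$-cable of a \emph{torus knot} $T(r,s)$, so the unknown companion $K'$ you are worrying about is already pinned down. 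Second, and decisively, the paper then quotes Lee's classification \cite{L3} of which twisted torus knots $T(p,q;2,1)$ are cable knots --- they are exactly $T(4,4m+1;2,1)$ --- giving an immediate contradiction with $p\ge 5$ via Lemma~\ref{lem:alex}.

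For the torus-knot case, your plan of fixing $(p,k)$, listing the finitely many coprime factorizations $(a-1)(b-1)=kp(p-1)+2$, and comparing gap sequences could in principle be made to work, but ``the remaining pairs are dispatched by a direct coefficient comparison'' is not an argument, and in particular nothing you wrote is uniform in $k$. The paper again short-circuits this by citing Lee \cite{L2}, who shows $T(p,q;2,1)$ is a torus knot precisely when $p\in\{2,3\}$.

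In short, the genuine gap is that you are attempting to reprove from scratch, via Alexander-polynomial bookkeeping, two classification theorems of Lee (\cite{L2}, \cite{L3}) that the paper simply cites; the harder of the two (cables) is left as an expectation rather than an argument.
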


\begin{proof}
First, $K=T(p,kp+1;2,1)$ is a torus knot if and only if
$p=2,3$ by \cite[Theorem 1.1]{L2}.
%Also, $K$ is a cable knot if and only if $p=4$ by \cite[Theorem 1.1]{L3}.
Hence we know that our knot is not a torus knot.

Assume that $K$ is a satellite knot for a contradiction.
We remark that $K$ has tunnel number one.
(A short arc at the extra full twist  gives an unknotting tunnel.)
Proposition \ref{prop:full} shows
that $K$ is the $(n,nrs+1)$-cable of $T(r,s)$.
Then $K=T(4,4m+1; 2,1)$ for some $m\ge 1$ by \cite{L3}.
This is a contradiction, because of  Lemma \ref{lem:alex} and $p\ge 5$.
Thus we have shown that $K$ is neither a torus knot nor a satellite knot, so
$K$ is hyperbolic.
\end{proof}

%%%%%%%%%%%%%%
\section{Upsilon torsion function}\label{sec:upsilon-torsion}

In this section, we determine the Upsilon torsion function $\Upsilon^{\mathrm{Tor}}(K)$ of $K=T(p,kp+1;2,1)$.
Since $K$ is an L--space knot (Lemma \ref{lem:tunnel}), the full knot Floer complex $\mathrm{CFK}^\infty(K)$ is determined
by the Alexander polynomial (\cite{OS}). 
It has the form of staircase  diagram described by the gaps of Alexander polynomial.
If the gaps are given as a sequence $a_1,a_2,\dots, a_n$, then
the terms give the length of horizontal and vertical steps.
More precisely, let $g$ be the genus of $K$.
Start at the vertex $(0,g)$ on the coordinate plane.
Go right $a_1$ steps, and down $a_2$ steps, and so on. Finally, we reach $(g,0)$.
By the symmetry of the Alexander polynomial, the staircase inherits the symmetry along the line $y=x$.

We follow the process in \cite[Appendix]{AL}.
However, we assign a modified filtration level $\FL$ to each generator of the complex.
If a generator $x$ has the coordinate $(p,q)$, then $\FL(x)=tq+(2-t)p$.
In fact, for any  $t\in [0,2]$,
$\FL$ defines a real-valued function on $\mathrm{CFK}^\infty(K)$.
Then, for all $s\in \mathbb{R}$, $\mathcal{F}_s$ is spanned by all vectors
$x \in \mathrm{CFK}^\infty(K)$ such that $\FL(x)\le s$.
The collection $\{\mathcal{F}_s\}$ gives a filtration on $\mathrm{CFK}^\infty(K)$.  See \cite{L}.
(Remark that this filtration level is just the twice of that used in \cite{AL}.)
Since $\mathcal{F}_s\subset \mathcal{F}_u$ if $s\le u$, 
a generator $x_i\in \mathcal{F}_u$ can be added by $x_j\in \mathcal{F}_s$, without any change of the filtration level.
That, $\FL(x_i)=\FL(x_i+x_j)$.

For the staircase complex,
repeating a change of basis gradually splits the complex  into a single isolated generator
and separated arrows.
Then the value of the Upsilon torsion function is given as the maximum difference between  filtration levels among the arrows.
%See \cite{AL}.

Since the Upsilon torsion function, defined on $[0,2]$, is symmetric along $t=1$, it suffices to consider the domain $[0,1]$.

As the simplest case, we demonstrate the process when $p=2$.

\begin{example}\label{ex:p=2}
Let $p=2$.
Then $K=T(2,2k+3)$ as mentioned before, and we show that
its Upsilon torsion function $\Upsilon^{\mathrm{Tor}}_K(t)=t \ (0\le t\le 1)$, independent of $k$.

By Corollary \ref{cor:gap}, the gaps of the exponents of the Alexander polynomial is $1,1,\dots, 1$ (repeated $2k+2$ times).
Hence the staircase diagram has the form as shown in Figure \ref{fig:p=2}, where
 $A_i$ has Maslov grading $0$, but $B_i$ has grading $1$, and each arrow has length one.

\begin{figure}[ht]
\centering
\includegraphics*[scale=0.7]{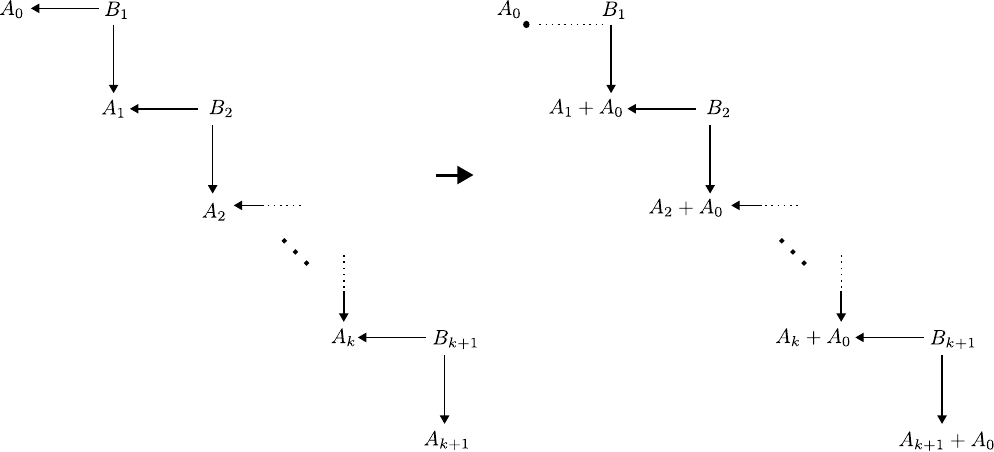}  
\caption{Left: The staircase diagram when $p=2$. The generator $A_i$ has grading $0$, but $B_i$ has grading $1$.
Each arrow has length one.
Right: By adding $A_0$ to $A_1,\dots, A_{k+1}$, the generator $A_0$ is isolated from the complex.}\label{fig:p=2}
\end{figure}

Each generator is assigned the filtration level $\FL$.
The difference between filtration levels among the generators is important.
We have $\FL(B_{i+1})-\FL(A_i)=2-t$ and $\FL(B_i)-\FL(A_i)=t$, because each arrow has length one.
Thus we have
\[
\FL(A_0)\le \FL(A_1)\le \dots \le \FL(A_{k+1}),
\]
where each equality occurs only when $t=1$.
Hence $A_0$ has the lowest filtration level among the generators with grading $0$.
Add $A_0$ to $A_1,\dots, A_{k+1}$.  Then the generator $A_0$ is isolated from the complex as shown in Figure \ref{fig:p=2}.
(Recall that we use $\mathbb{F}_2$ coefficients.)
In the remaining part of the complex,  $A_1+A_0$ is the lowest, since $\FL(A_i+A_0)=\FL(A_i)$ for $i=1,2,\dots,k+1$. 
To simplify the notation, we keep the same symbol $A_i$, instead of $A_i+A_0$, after this, if no confusion can arise.

Add $A_1$ to the other generators with grading $0$, except $A_0$.
Then the arrow $B_1\rightarrow A_1$ is split off from the complex.
Repeating this process leads to the decomposition of the original staircase into one isolated generator $A_0$ and 
$k+1$ vertical arrows.
For each arrow, the difference of filtration levels is equal to $t$, so the maximum difference is $t$ among the arrows.
This  shows $\Upsilon^{\mathrm{Tor}}_K(t)=t$.
\end{example}

\begin{theorem}\label{thm:upsilon-torsion}
Let $p\ge 4$.
The Upsilon torsion function $\Upsilon^{\mathrm{Tor}}_K(t)$ is given as
\[
\Upsilon_K^{{\rm Tor}}(t)=
\begin{cases}
(p-1)t & (0\le t \le \frac{2}{p})\\
2-t & (\frac{2}{p}\le t \le \frac{2}{p-2})\\
(p-3)t & (\frac{2}{p-2}\le t \le \frac{4}{p})\\
2m+(-m-1)t & (\frac{2m}{p}\le t \le \frac{2m}{p-1},\ m=2,\dots, \lfloor\frac{p-1}{2}\rfloor)\\
(p-2-m)t & (\frac{2m}{p-1}\le t\le \frac{2(m+1)}{p},\ m=2,\dots,\lfloor\frac{p}{2}\rfloor-1).
\end{cases}
\] 
In particular, $\Upsilon^{\mathrm{Tor}}_K(1)=\lfloor \frac{p-2}{2}\rfloor$.
\end{theorem}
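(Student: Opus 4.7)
Because $K$ is an L--space knot (Lemma \ref{lem:tunnel}), by \cite{OS} the full knot Floer complex $\mathrm{CFK}^\infty(K)$ is a staircase whose successive horizontal and vertical step lengths are exactly the gaps of $\Delta_K(t)$ listed in Corollary \ref{cor:gap}. The plan is to compute $\Upsilon^{\mathrm{Tor}}_K(t)$ by the iterative change-of-basis procedure illustrated in Example \ref{ex:p=2}: at each stage, identify the grade-$0$ generator of smallest filtration level $\FL$, add it to the remaining grade-$0$ generators so that it becomes isolated, at which point one arrow splits off cleanly from the remaining complex. In the end, $\Upsilon^{\mathrm{Tor}}_K(t)$ is the largest filtration difference over all split-off arrows. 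By reciprocity of $\Delta_K$, the staircase is symmetric across $y=x$, so $\Upsilon^{\mathrm{Tor}}_K$ is symmetric about $t=1$ and it suffices to treat $t\in[0,1]$.

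Label the grade-$0$ vertices $A_0,\dots,A_\ell$ and the grade-$1$ vertices $B_1,\dots,B_\ell$ along the staircase from top-left to bottom-right, so that each $B_j$ has a horizontal arrow to $A_{j-1}$ of length equal to the preceding horizontal gap and a vertical arrow to $A_j$ of length equal to the following vertical gap. A horizontal arrow of length $a$ contributes $(2-t)a$ to the filtration difference, and a vertical arrow of length $a$ contributes $ta$. Each $\FL(A_i)=ty_i+(2-t)x_i$ is linear in $t$, so the ordering of the $A_i$'s changes only at finitely many values of $t$. The $k$-fold repetitions $(\cdot)^k$ in Corollary \ref{cor:gap} produce $k$ identical local staircase shapes, and the reduction splits off $k$ identical arrows for each such shape; hence the \emph{maximum} filtration difference, and therefore $\Upsilon^{\mathrm{Tor}}_K$, does not depend on $k$, consistent with the statement.

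Carrying out the reduction, the split-off arrows realize specific lengths dictated by the block structure of Corollary \ref{cor:gap}. On each subinterval of the claimed partition of $[0,1]$ a single arrow dominates: the interval $[0,2/p]$ is dominated by the outer vertical arrow of length $p-1$ coming from the block $(1,p-1)^k$, giving $(p-1)t$; on $[2/p,2/(p-2)]$ a horizontal arrow of length $1$ dominates, giving $2-t$; on $[2/(p-2),4/p]$ a vertical arrow of length $p-3$ from the inner block $(1,1,1,p-3)^k$ takes over as $(p-3)t$; and the remaining pieces $2m-(m+1)t$ and $(p-2-m)t$ arise from arrows associated with the deeper inner blocks $(1,1,m,p-2-m)^k$ and their mirror images. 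The breakpoints $2m/p$, $2m/(p-1)$, and $2/(p-2)$ are precisely the values of $t$ at which two candidate dominant arrows tie in filtration difference. Substituting $t=1$ places us in the last interval of the partition and yields $\Upsilon^{\mathrm{Tor}}_K(1)=\lfloor (p-2)/2\rfloor$.

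The main obstacle will be the combinatorial bookkeeping of the iterated reduction: at each stage one must track, in the modified basis, which $B_j$'s are still incident to which $A_i$'s, verify that the purported minimum-$\FL$ vertex is actually minimal on the relevant subinterval, and confirm that the arrow peeled off has the length predicted by the formula. The nested block structure of the gap sequence suggests peeling from the outside inward, but the order of reduction itself varies with $t$, so the argument will need to partition $[0,1]$ into the stated subintervals and handle each one separately, reconciling the reduction performed on each piece with the dominant arrow on that piece.
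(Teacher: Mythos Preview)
Your plan is the paper's approach: reduce the staircase via the Allen--Livingston change-of-basis procedure, partition $[0,1]$ according to which grade-$0$ generator has minimal $\FL$, and read off the maximum filtration gap among the resulting arrows. The paper executes this in four cases matching your subintervals, with explicit labels $A_i,B_i,C_i^j,D_i^j,E_i^j,F_i^j,G,A_i',B_i'$ for the generators in the three blocks of the gap sequence and a table of the relevant $\FL$-differences.

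There is, however, one point where your description is too simple and, taken literally, would not produce the term $2m-(m+1)t$. You write that at each stage ``one arrow splits off cleanly,'' but on the intervals with $m\ge 2$ this is not what happens. After isolating the global minimum $C_0^m$, the left piece is an N-shaped zigzag in which the successive minima $C_{k-1}^{m-1},C_{k-2}^{m-1},\dots$ each peel off a \emph{four-vertex} subcomplex
\[
C_{i-1}^{j}\ \leftarrow\ D_i^{j}\ \rightarrow\ E_i^{j}\ \leftarrow\ F_i^{j},
\]
not a single arrow. Each such piece must then be broken by a second, different change of basis (add $D_i^j$ to $F_i^j$, then $C_{i-1}^j$ to $E_i^j$), yielding one arrow of difference $2-t$ and one of difference $\FL(F_i^j)-\FL(C_{i-1}^j)=2(j+1)-(j+2)t$; the maximum over $j\le m-1$ is the claimed $2m-(m+1)t$. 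Note that $2m-(m+1)t$ cannot be written as $(2-t)a$ or $ta$ for any integer $a$, so it does not correspond to any arrow of the original staircase; your attribution of it to ``arrows associated with the deeper inner blocks'' glosses over the mechanism that actually produces it. An analogous mirror-N decomposition is needed on the right piece in the sub-case $\frac{2m}{p-2}\le t\le\frac{2(m+1)}{p}$. The ``bookkeeping'' you flag as the main obstacle is exactly this two-stage reduction, and it is where the substantive content of the computation lies.
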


\begin{proof}
Recall that the gaps are
\[
(1,p-1)^k,(1,1,1,p-3)^k,(1,1,2,p-4)^k,\dots, (1,1,p-3,1)^k,1,1,(p-1,1)^k
\]
by Corollary \ref{cor:gap}.
We name the generators of the staircase as in Figures \ref{Fig1}, \ref{Fig2} and  \ref{Fig3}.

\begin{figure}[ht]
\centering
\includegraphics*[scale=0.3]{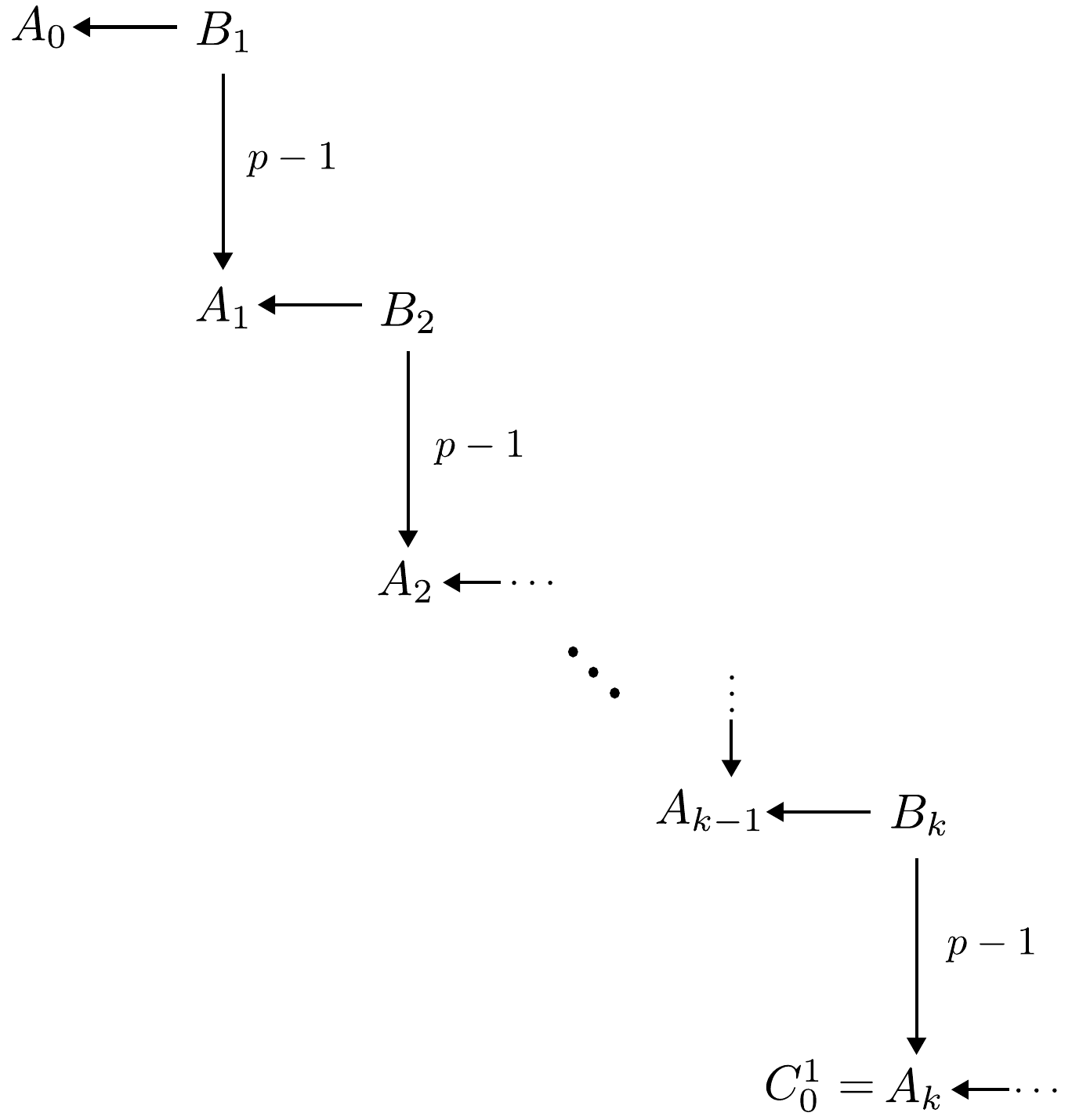}  
\caption{The first part corresponds to $(1,p-1)^k$. The generators $A_i$ have Maslov grading $0$, but
$B_i$ have $1$.
 The number $p-1$ next to each vertical arrow indicates the length.
Each horizontal arrow has length one.
Here, $C^1_0=A_k$. }\label{Fig1}
\end{figure}

\begin{figure}[ht]
\centering
\includegraphics*[scale=0.3]{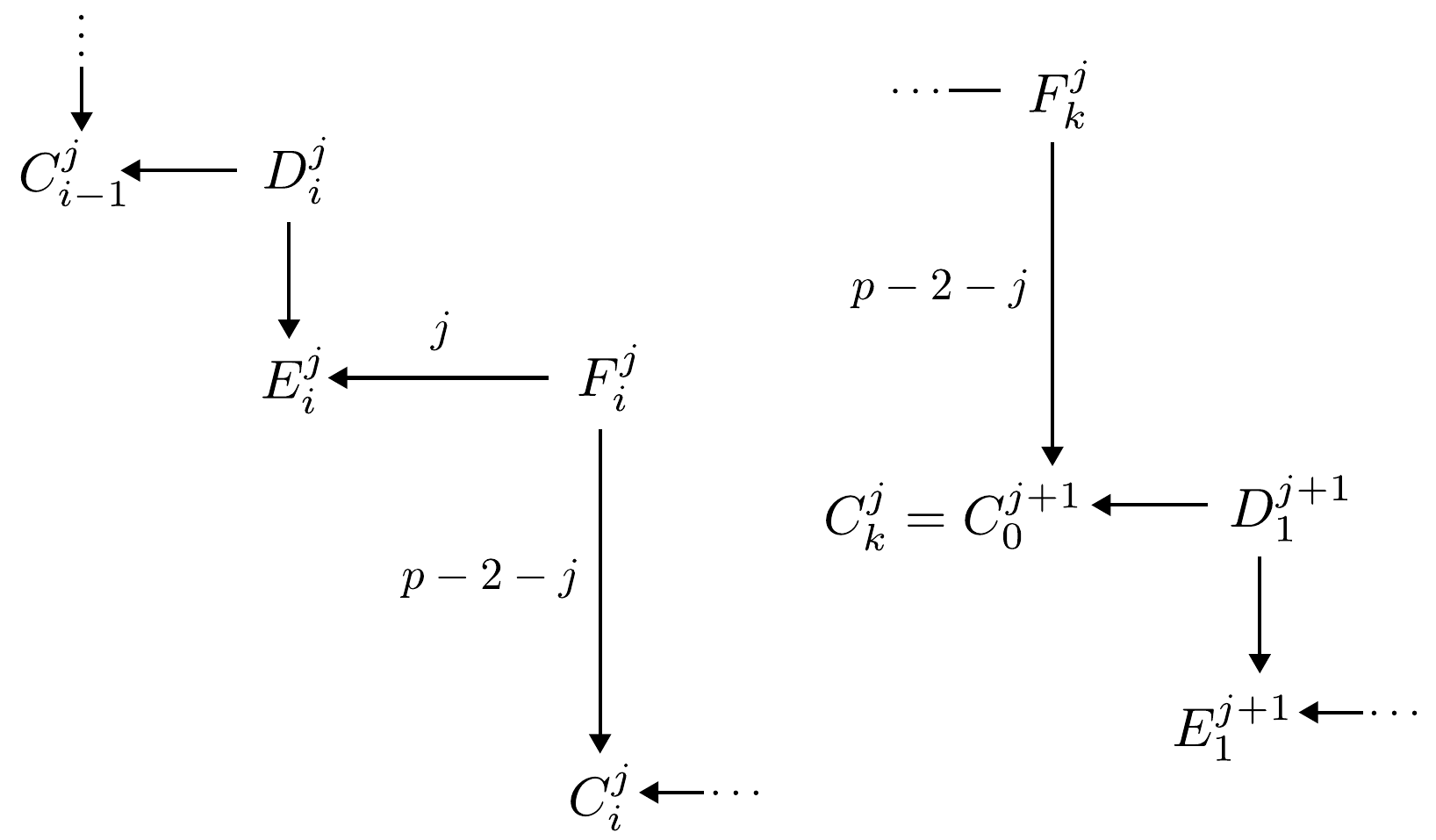}  
\caption{Left: The second part corresponds to $(1,1,j,p-2-j)^k\ (j=1,\ldots,p-3)$.
The generators $C_*^j$ and $E_*^j$ have Maslov grading $0$, but the others have $1$.
Right: This is a connecting part between $(1,1,j,p-2-j)$ and $(1,1,j+1, p-2-(j+1))$.}\label{Fig2}
\end{figure}

\begin{figure}[ht]
\centering
\includegraphics*[scale=0.3]{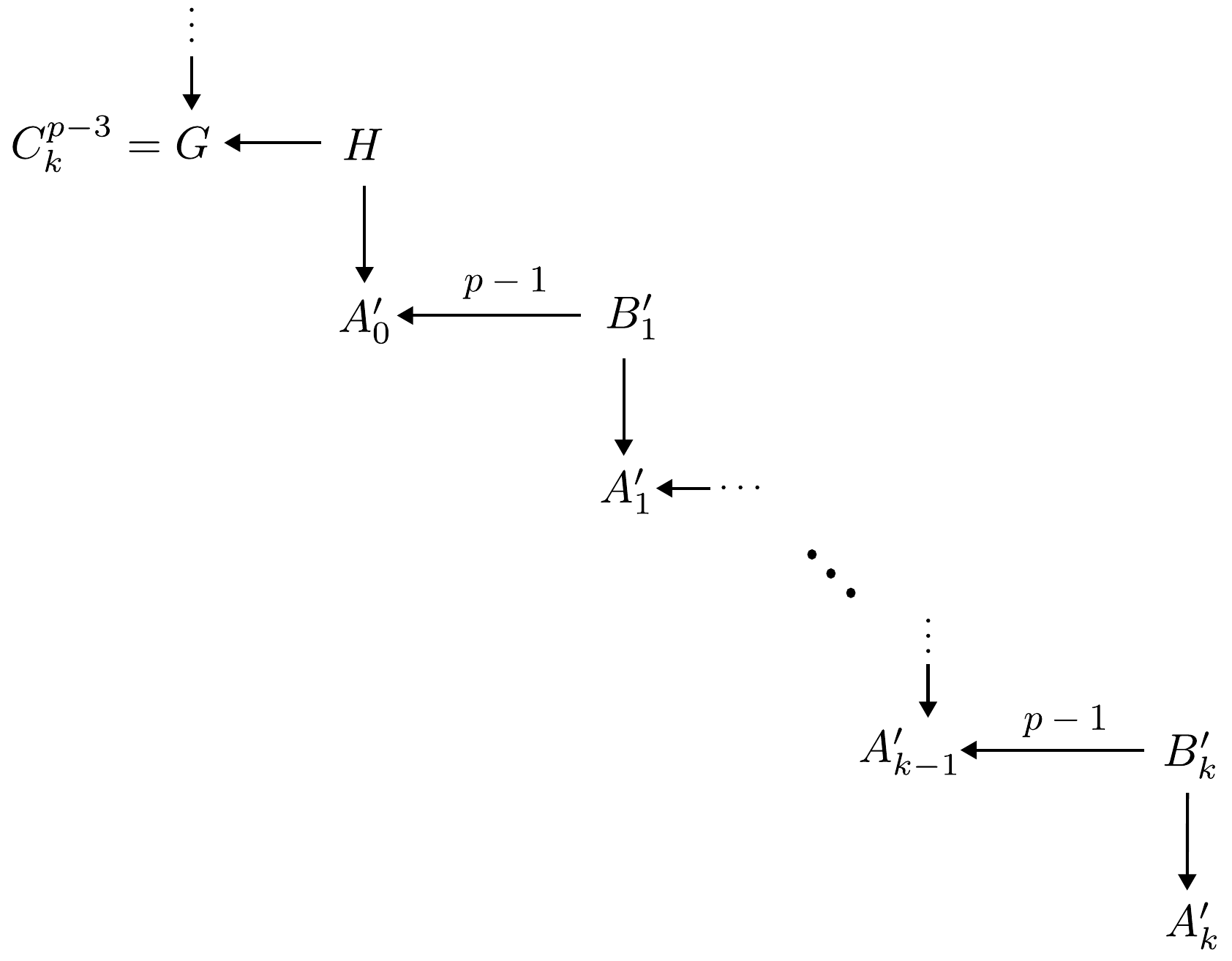}  
\caption{The last part corresponds to $1,1,(p-1,1)^k$.
The generators $G$ and $A_*'$ have Maslov grading $0$, but the others have $1$.
}\label{Fig3}
\end{figure}

In particular, we have 
 the difference between filtration levels of certain generators with Maslov grading 0
 as in Table \ref{table:filtration}.
 The argument is divided into 4 cases.

%$
%\[
%\def\arraystretch{1.5}
%\begin{array}{l l}
%\FL(A_i)-\FL(A_{i-1})=2-pt & (i=0,\ldots,k)\\
%\FL(E^j_i)-\FL(C^j_{i-1})=2-2t\ge 0 & (i=1,\ldots,k,\ j=1,\ldots,p-3)\\
%\FL(C^j_i)-\FL(E^j_i)=(2-p)t+2j & (i=1,\ldots,k,\ j=1,\ldots,p-3)\\
%\FL(C^j_i)-\FL(C^j_{i-1})=-pt+2(j+1) & (i=1,\ldots,k,\ j=1,\ldots,p-3)\\
%\FL(C^j_0)-\FL(C^{j-1}_0)=(-pt+2j)k & (j=2,\ldots,p-3) \\
%\FL(A'_0)-\FL(G)=2-2t\ge 0 & \\
%\FL(A'_i)-\FL(A'_{i-1})=-pt+2p-2>0 & (i=1,\ldots,k).
%\end{array}
%\]

\begin{table}[htbp]
\renewcommand{\arraystretch}{1.2}
\begin{tabular}{l |l}
\hline
Difference & Indices\\
\hline
$\FL(A_i)-\FL(A_{i-1})=2-pt$ & $i=0,\ldots,k$\\
$\FL(E^j_i)-\FL(C^j_{i-1})=2-2t\ge 0$ & $i=1,\ldots,k;\ j=1,\ldots,p-3$\\
$\FL(C^j_i)-\FL(E^j_i)=(2-p)t+2j $& $i=1,\ldots,k;\ j=1,\ldots,p-3$\\
$\FL(C^j_i)-\FL(C^j_{i-1})=-pt+2(j+1)$ & $i=1,\ldots,k;\ j=1,\ldots,p-3$\\
$\FL(C^j_0)-\FL(C^{j-1}_0)=(-pt+2j)k$ & $j=2,\ldots,p-3$ \\
$\FL(A'_0)-\FL(G)=2-2t\ge 0$ & \\
$\FL(A'_i)-\FL(A'_{i-1})=-pt+2p-2>0$ & $i=1,\ldots,k$\\
\hline
\end{tabular}
\caption{Difference between filtration levels of the generators
with Maslov grading $0$.}
\label{table:filtration}
\end{table}

\medskip
\textbf{Case 1. $0\le t \le \frac{2}{p}$.}
Then any difference in Table \ref{table:filtration} is at least $0$.
Hence $A_0$ has the lowest filtration level among the generators with grading $0$, whose filtration levels
increase when we go to the right.

Exactly as in Example \ref{ex:p=2}, 
the staircase complex is decomposed  into a single isolated generator $A_0$
and separated vertical arrows $B_i\rightarrow A_i\ (i=1,2,\dots,k)$.
Hence the maximum difference of filtration levels on the arrows is $(p-1)t$.
This gives $\Upsilon^{\mathrm{Tor}}_K(t)=(p-1)t$ for $0\le t \le 2/p$.

\medskip
\textbf{Case 2. $\frac{2}{p}\le t \le \frac{4}{p}$.}
Then $\FL(A_0)\ge \FL(A_1)\ge \dots \ge \FL(A_k)$.
After $A_k$, the filtration levels increase among the generators with grading $0$, so $A_k$ is the lowest.
Add $A_k$ to the other generators with grading $0$.
Then $A_k$ will be isolated, and the complex splits into two parts.
We say that the first part, which starts at $A_0$ and ends at $B_k$, is \textit{N-shaped}, but the second, which starts at $D_1^1$ and ends at $A_k'$, is \textit{mirror N-shaped}.
In general,
if a ``zigzag'' complex starts and ends at horizontal arrows, then it is N-shaped.
If it starts and ends at vertical arrows, then it is mirror N-shaped.

For the first part, add $A_{k-1}$ to the others with grading $0$, which splits the arrow $A_{k-1} \leftarrow B_k$ off.
Repeat this as in Case 1.  Then the N-shaped complex is decomposed into separated horizontal arrows $A_{i-1}\leftarrow B_i\ (i=1,2,\dots,k)$, each of which has difference $2-t$.
The mirror N-shaped complex is also decomposed into vertical arrows similarly.
Thus the maximum difference among them is $(p-3)t$.

Compare $2-t$ and $(p-3)t$.
If $\frac{2}{p}\le t \le \frac{2}{p-2}$, then $2-t \ge (p-3)t$.
If $\frac{2}{p-2}\le t \le \frac{4}{p}$, then $2-t\le (p-3)t$.
Hence $\Upsilon^{\mathrm{Tor}}_K(t)=2-t$ for $\frac{2}{p}\le t\le \frac{2}{p-2}$, and $(p-3)t$  for $\frac{2}{p-2}\le t  \le\frac{4}{p}$.

\medskip
\textbf{Case 3. $\frac{2m}{p}\le t \le \frac{2m}{p-1}\ (m=2,\dots,\lfloor (p-1)/2\rfloor)$.}

From Table \ref{table:filtration}, we see that $C_k^{m-1}=C_0^m$ is the lowest among the generators with grading $0$.
Adding this to the others with grading $0$ decomposes the complex into one isolated generator $C_0^m$,
the N-shaped one between $A_0$ and $F_k^{m-1}$ and the mirror N-shaped one between $D_1^m$ and $A_k'$.

As before,  the mirror N-shaped complex can be decomposed into vertical arrows.
The longest arrows has length $(p-2-m)t$.

\begin{figure}[ht]
\centering
\includegraphics*[scale=0.4]{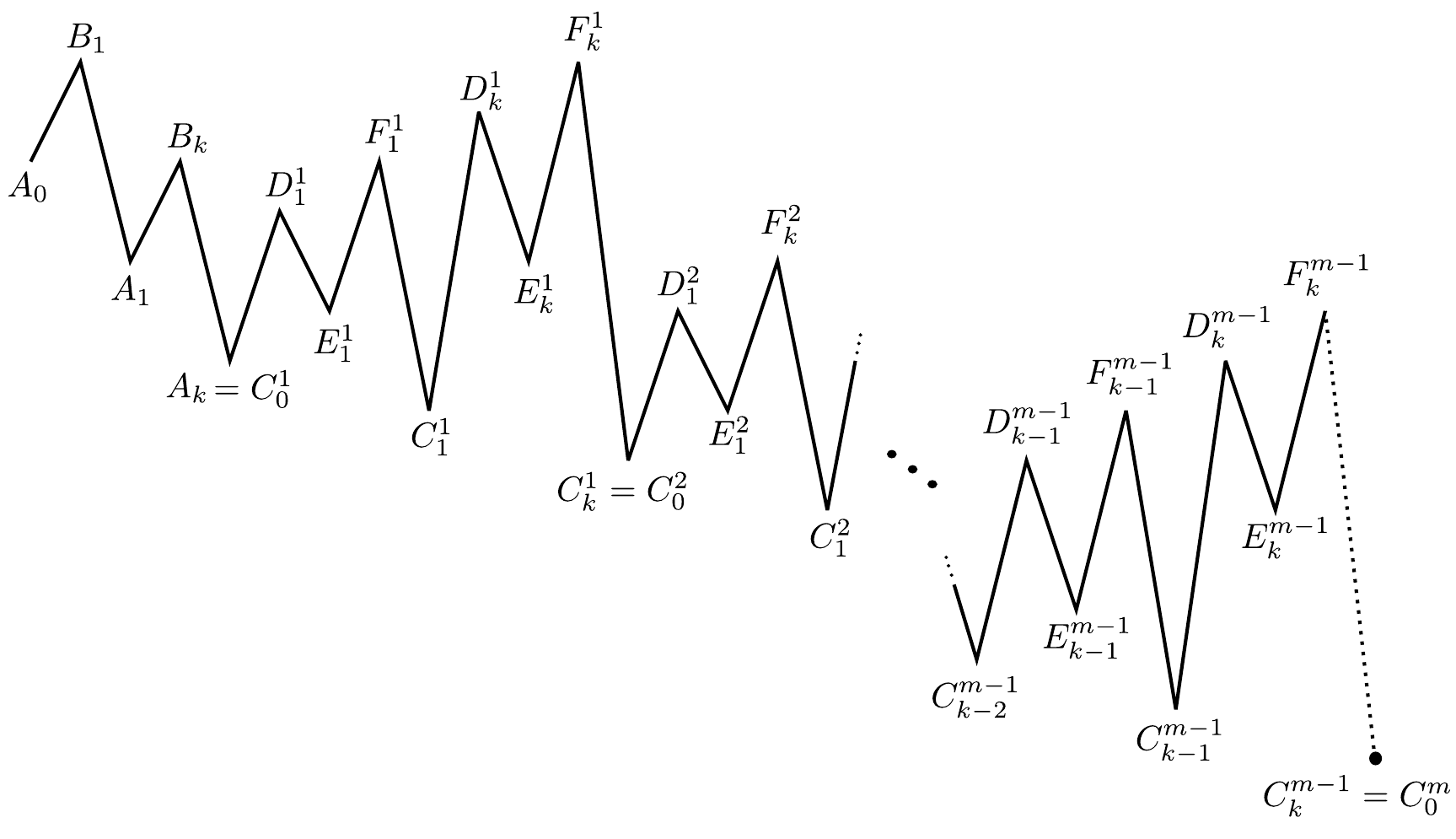}  
\caption{The N-shaped complex between $A_0$ and $F_k^{m-1}$ after isolating the lowest vertex $C_0^m$, where $k=2$.
The height indicates  the filtration level of each generator.
As before, we keep the same notation for generators after a change of basis.
}\label{fig:case3}
\end{figure}

The N-shaped complex is described in Figure \ref{fig:case3}.
We have
\[
\begin{split}
\FL(A_0)\ge \FL(A_1)\ge \cdots &\ge \FL(A_k=C_0^1)\ge \FL(C_1^1)\ge \dots \ge \FL(C_k^1=C_0^2)\\
&\ge \FL(C_1^2)\ge \dots \ge \FL(C_{k-1}^{m-1})
\end{split}
\]
and $\FL(E_i^j)\ge \FL(C_{i-1}^j)\ (i=1,2,\dots,k; j=1, 2\dots, m-1 )$.

Hence $C_{k-1}^{m-1}$ is the lowest.
Adding this to the others with grading $0$ on the left splits an N-shaped complex 
$C_{k-1}^{m-1} \leftarrow D_k^{m-1} \rightarrow E_k^{m-1} \leftarrow F_k^{m-1}$
off.
For the remaining part, the lowest is $C_{k-2}^{m-1}$.
Again, adding this to the others with grading $0$ on the left splits an N-shaped complex
$C_{k-2}^{m-1} \leftarrow D_{k-1}^{m-1} \rightarrow E_{k-1}^{m-1} \leftarrow F_{k-1}^{m-1}$
off.
Repeat this, then we obtain  an N-shaped complex between $A_0$ and $B_k$, and
N-shaped complexes 
$C_{i-1}^j \leftarrow D_{i}^{j} \rightarrow E_{i}^{j} \leftarrow F_{i}^{j} \ (i=1,\dots, k; j=1,\dots,m-1)$.

For the former, the process as in Case 2 yields separated horizontal arrows, each of which has difference $2-t$.
Let us consider the latter N-shaped ones.
Since $\FL(F_i^j)-\FL(D_i^j)=-t+(2-t)j\ge 0$, add $D_i^j$ to $F_i^j$.
After that, add $C_{i-1}^j$ to $E_i^j$.
As shown in Figure \ref{fig:case3N}, 
this change of basis decomposes the complex into a pair of arrows.
One has difference $2-t$, and the other has difference $\FL(F_i^j)-\FL(C_{i^-1}^j)=2(j+1)+(-j-2)t$.
Note $2-t\le 2(j+1)+(-j-2)t$.
Furthermore, for $1\le j\le m-1$, $j=m-1$ attains the maximum value, $2m+(-m-1)t$.

\begin{figure}[ht]
\centering
\includegraphics*[scale=0.8]{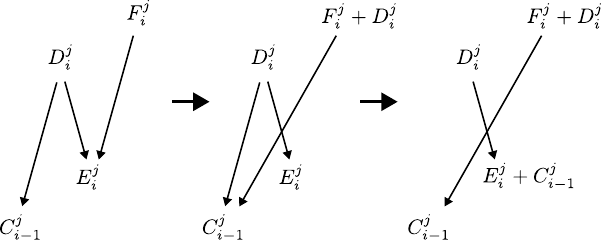}  
\caption{A change of basis for an N-shaped complex.
Add $D_i^j$ to $F_i^j$, and $C_{i-1}^j$ to $E_i^j$.}\label{fig:case3N}
\end{figure}

Hence we need to compare the values $(p-2-m)t$ and $2m+(-m-1)t$.
Since $2m+(-m-1)t \ge (p-2-m)t$,
we have $\Upsilon^{\mathrm{Tor}}_K(t)=2m+(-m-1)t$ for this case.

\medskip
\textbf{Case 4. $\frac{2m}{p-1}\le t \le \frac{2(m+1)}{p}\ (m=2,\dots,\lfloor p/2\rfloor-1)$.}

As in Case 3, $C_k^{m-1}=C_0^m$ is the lowest.
So, adding this to the others with grading $0$ decomposes the complex into one isolated generator $C_0^m$,
the N-shaped one and the mirror N-shaped one, again.

For the N-shaped complex, the situation is the same as in Case 3.
Thus we have an arrow with maximum difference $2m+(-m-1)t$ from this N-shaped complex.

However, we need to handle the mirror N-shaped complex differently now.

First, consider the case where $\frac{2m}{p-1}\le t\le \frac{2m}{p-2}$.
Then the filtration levels of  the generators with grading $0$ increase as going to the right.
So, as in Case 3, this part can be decomposed into vertical arrows, and the longest has length $(p-2-m)t$.

Second, consider the case where $\frac{2m}{p-2}\le t\le \frac{2(m+1)}{p}$.
Then $\FL(E_i^m)\ge \FL(C_i^m)\ge \FL(C_{i-1}^m)\ (i=1,2,\dots,k)$, but
the filtration levels of
the remaining generators with grading $0$, $C_0^{m+1},E_1^{m+1},C_1^{m+1},\dots,G,A_0',\dots A_k'$,
increase as going to the right.
See Figure \ref{fig:case4}.

\begin{figure}[ht]
\centering
\includegraphics*[scale=0.35]{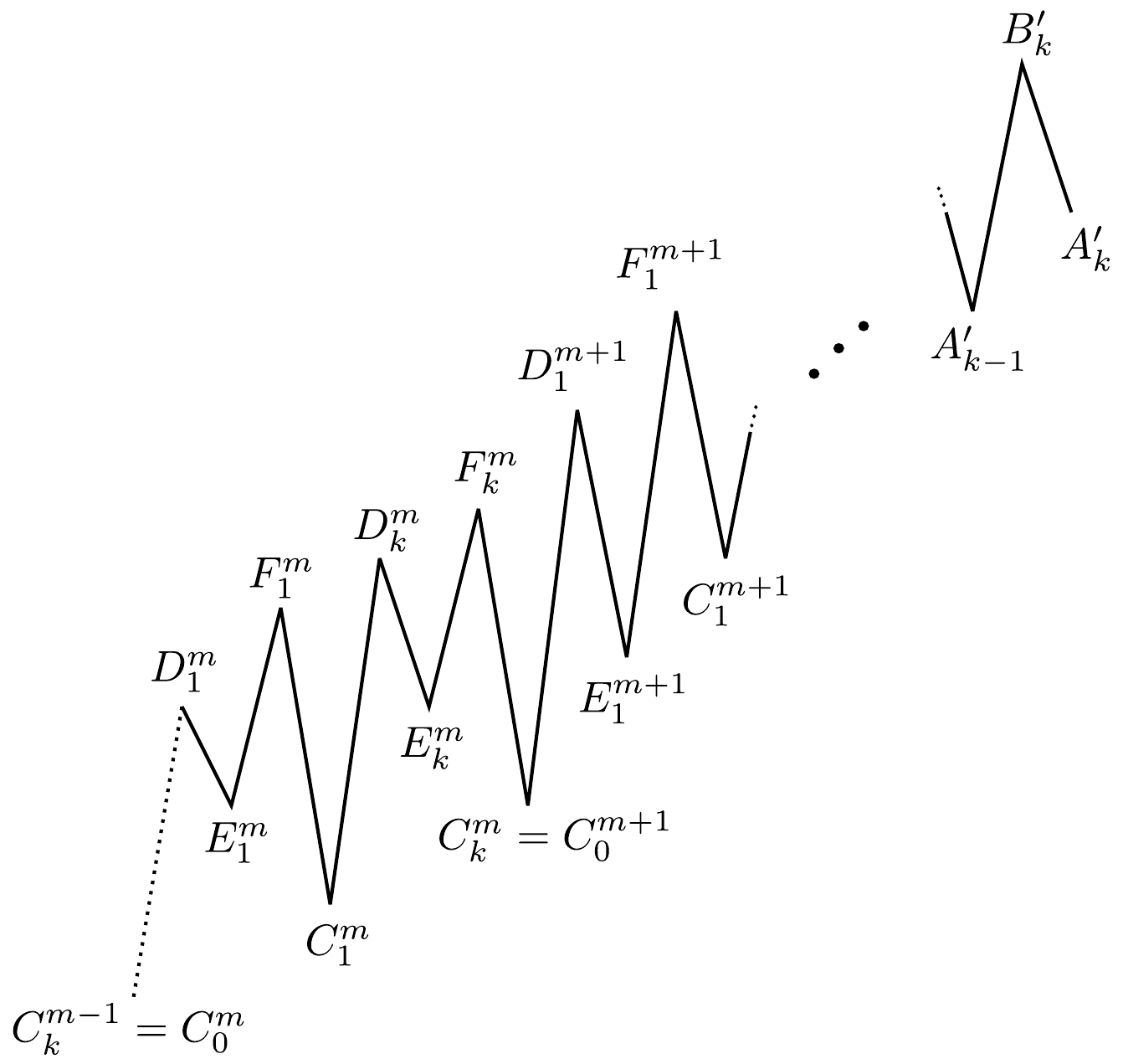}  
\caption{The mirror N-shaped complex when $\frac{2m}{p-2}\le t\le \frac{2(m+1)}{p}$, where $k=2$.
The generator $C_1^m$ is the lowest.}\label{fig:case4}
\end{figure}

Here, $C_1^m$ is the lowest.
Adding this to the others with grading $0$ on the right splits a mirror N-shaped complex 
$D_1^m\rightarrow E_1^m \leftarrow F_1^m \rightarrow C_1^m$ off.
Then $C_2^m$ is the lowest in the remaining part.
Repeating this yields mirror N-shaped complexes $D_i^m\rightarrow E_i^m \leftarrow F_i^m \rightarrow C_i^m\ (i=1,2,\dots,k)$,
and one more mirror N-shaped one between $D_1^{m+1}$ and $A_k'$.
For the last one, the previous process gives vertical arrows.
For each mirror N-complex $D_i^m\rightarrow E_i^m \leftarrow F_i^m \rightarrow C_i^m$,
we remark $\FL(F_i^m)-\FL(D_i^m)=(-m-1)t+2m\ge 0$.
Hence adding $D_i^m$ to $F_i^m$ yields a pair of vertical arrows as shown in Figure \ref{fig:case4N}.
Thus we have only vertical arrows, whose longest length is $(p-2-m)t$.

\begin{figure}[ht]
\centering
\includegraphics*[scale=0.8]{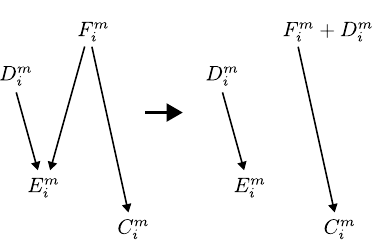}  
\caption{A change of basis for a mirror N-shaped complex.
Adding $D_i^m$ to $F_i^m$ yields a pair of vertical arrows.}\label{fig:case4N}
\end{figure}

Finally, compare $2m+(-m-1)t$ and $(p-2-m)t$.
Since $\frac{2m}{p-2}\le t\le \frac{2(m+1)}{p}$, the latter is bigger.
Then $\Upsilon^{\mathrm{Tor}}_K(t)=(p-2-m)t$ for this case.
\end{proof}

\begin{example}
When $p=6$,
\[
\Upsilon^{\mathrm{Tor}}_K(t)=
\begin{cases}
5t & (0\le t\le \frac{1}{3}) \\
2-t & (\frac{1}{3}\le t\le \frac{1}{2}) \\
3t & (\frac{1}{2}\le t\le \frac{2}{3}) \\
4-3t & (\frac{2}{3}\le t\le \frac{4}{5}) \\
2t & (\frac{4}{5}\le t\le 1).
\end{cases}
\]
See Figure \ref{fig:p=6}.
\end{example}

\begin{figure}[ht]
\centering
\includegraphics*[scale=0.7]{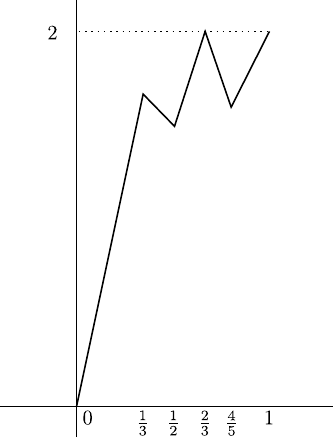}  
\caption{The Upsilon torsion function $\Upsilon^{\mathrm{Tor}}_K(t)$ 
of $K=T(6,6k+1;2;1)$.
Then $\Upsilon^{\mathrm{Tor}}_K(1)=2$.}\label{fig:p=6}
\end{figure}

%%%
\section{Torsion order}

We are ready to prove Theorem \ref{thm:main}.

\begin{proof}[Proof of Theorem \ref{thm:main}]
By \cite{AL}, $\Upsilon'^{\mathrm{Tor}}_K(0)=\mathrm{Ord}(K)$ and
$\Upsilon^{\mathrm{Tor}}_K(1)=\mathrm{Ord}'(K)$.
Thus Theorem \ref{thm:upsilon-torsion} immediately gives
$\mathrm{Ord}(K)=p-1$ and $\mathrm{Ord}'(K)=\lfloor (p-2)/2 \rfloor$ when $p\ge 4$.

When $p\in \{2,3\}$, $K$ is a torus knot, and
$\mathrm{Ord}(K)$ is equal to the longest gap in the exponents of the Alexander
polynomial by \cite[Lemma 5.1]{JMZ}.
Hence it is $p-1$ by Corollary \ref{cor:gap}.
(Indeed, the latter argument proves $\mathrm{Ord}(K)=p-1$ for any $p\ge 2$.)
\end{proof}

\begin{proof}[Proof of Corollary \ref{cor:main}]
By Proposition \ref{prop:hyp}, the twisted torus knot $K=T(p,kp+1;2,1)$  is hyperbolic if $p\ge 5$.
Since $K$ has genus $(kp^2-kp+2)/2$,
distinct choices of $k$, with a fixed $p$, give distinct knots.

Set $K_2=K$ with $p=2N+3\ge 5$.  Then $K_2$ is hyperbolic
and $\mathrm{Ord}'(K_2)=\lfloor (p-2)/2\rfloor=N$.

If $N\ge 4$, then set $K_1=K$ with $p=N+1$.
Then $K_1$ is hyperbolic and $\mathrm{Ord}(K_1)=p-1=N$.

To complete the proof,
we need infinitely many hyperbolic knots $K_1$ whose
$\mathrm{Ord}(K_1)$ takes each of the values $1,2, 3$.

\begin{enumerate}
\item
By \cite[Corollary 1.8]{JMZ}, $\mathrm{Ord}(L)\le b(L)-1$ for any knot $L$.
Hence if $K_1$ is a hyperbolic $2$-bridge knot, then $\mathrm{Ord}(K_1)=1$.
\item
Let $K_1=T(3,4;2,s)$ with $s\ge 2$.
Then  $K_1$ is an L--space knot (\cite{V}),  and twist positive in the sense of \cite{KM}.
In the proof of \cite[Theorem 1.3]{KM}, they show
that $\mathrm{Ord}(K_1)=2$.
By \cite{L1,L2}, $K_1$ is hyperbolic.
Since $K_1$ has genus $s+3$, distinct choices of $s$ give distinct knots.
\item
Finally, there are infinitely many hyperbolic L--space knots  $\{k_n\}$, defined in \cite[Section 2]{BK}, 
with $\mathrm{Ord}(k_n)=3$.  (See \cite[Proposition 5.1]{KM}.)
\end{enumerate}
\end{proof}

\begin{proof}[Proof of Corollary \ref{cor:main2}]
Let $K=T(p,kp+1;2,1)$ with $p\ge 5$.
Then $K$ is hyperbolic by Proposition \ref{prop:hyp}.
After fixing $p$,
Theorem \ref{thm:upsilon-torsion} shows that
the Upsilon torsion function does not depend on $k$.
\end{proof}

%\section*{Acknowledgment}

%%%%%%%%%%%%%%%%%%%%%%%
\bibliographystyle{amsplain}

\end{document}